\documentclass{amsart}
\usepackage{amsmath,amssymb}
\usepackage[dvips]{graphics}
\usepackage[dvipdfmx]{graphicx}
\usepackage[all]{xy}

\newtheorem{Theorem}{Theorem}[section]

\newtheorem{Proposition}[Theorem]{Proposition}

\theoremstyle{definition}

\newtheorem{Problem}[Theorem]{Problem}

\theoremstyle{remark}
\newtheorem{Remark}[Theorem]{Remark}

\makeatletter
\@addtoreset{figure}{section}%{subsection}
\def\@thmcountersep{-}
\makeatother

\numberwithin{equation}{section}

\begin{document} 

\title[Intrinsic linking of a simplicial $n$-complex embedded in $\mathbb{R}^{2n}$]{Intrinsic linking of a simplicial $n$-complex embedded in $\mathbb{R}^{2n}$}

%    Information for first author
\author{Ryo Nikkuni}
\address{Department of Information and Mathematical Sciences, School of Arts and Sciences, Tokyo Woman's Christian University, 2-6-1 Zempukuji, Suginami-ku, Tokyo 167-8585, Japan}
\email{nick@lab.twcu.ac.jp}
%\thanks{The author was supported by JSPS KAKENHI Grant Number 25K06987.}

%    General info
\subjclass{Primary 57K45; Secondary 57M15}

\date{}

\dedicatory{This article is dedicated to Professor Akira Yasuhara on his 60th birthday.}

\keywords{Non-separating planar graph, Linking number, van Kampen--Flores theorem}

\begin{abstract}
We demonstrate the existence of minimal simplicial $n$-complexes which inevitably contain a nonsplittable two-component link formed by an $(n-1)$-sphere and an $n$-sphere in any embedding into $\mathbb{R}^{2n}$. This provides a higher-dimensional generalization of graphs that are not non-separating planar. 
\end{abstract}

\maketitle

\section{Introduction} 

Throughout this paper, we work in the piecewise linear category. For the fundamentals of piecewise-linear topology, we refer the reader to \cite{H69}, \cite{RS82}. Let $K$ be a finite simplicial $n$-complex, which we identify with its polyhedron in this context. For an integer $k\le n$, let $\varDelta^{k}(K)$ denote the set of all $k$-simplices in $K$. Then the subcomplex $\bigcup_{l\le k}\varDelta^{l}(K)$ of $K$ is called the {\it $k$-skeleton} of $K$ and denoted by $K^{k}$. Let $f$ be an embedding of $K$ into $\mathbb{R}^{m}$. Let $\Lambda^{p,q}(K)$ be the set of all unordered pairs $\{\gamma, \gamma'\}$ of mutually disjoint subcomplexes of $K$ such that $\gamma$ is homeomorphic to the $p$-sphere and $\gamma'$ is homeomorphic to the $q$-sphere. We identify any unordered pair $\{\gamma,\gamma'\}$ in $\Lambda^{p,q}(K)$ with the disjoint union $\gamma\sqcup \gamma'$. Then, for any $\lambda = \gamma \sqcup \gamma' \in \Lambda^{p,q}(K)$, the image $f(\lambda) = f(\gamma) \sqcup f(\gamma')$ forms a two-component link in $f(K)$ consisting of a $p$-sphere and a $q$-sphere. If $p+q = m-1$, the \textit{$\mathbb{Z}_2$-linking number} ${\rm lk}_2(L) = {\rm lk}_2(S^p, S^q) = {\rm lk}_2(S^q, S^p) \in \mathbb{Z}_2$ of a two-component link $L = S^p \sqcup S^q$ in ${\mathbb R}^m$ is well-defined (cf.\ \cite{ST80}). If $m=2n+1$, there are known examples of simplicial $n$-complexes with the following property: every embedding into $\mathbb{R}^{2n+1}$ necessarily contains a two-component link of $n$-spheres with $\mathbb{Z}_2$-linking number $1$. Such examples were constructed by Segal--Spie\.{z} \cite{SeSp92}, Lovasz--Schrijver \cite{LS98}, Skopenkov \cite{Sk03}, Taniyama \cite{T00}, and more recently by the author \cite{N26}. In particular, the case of $n=1$ is famous as the Conway--Gordon--Sachs theorem (\cite{CG83}, \cite{S84}), and such a simplicial $1$-complex (i.e., a graph) is said to be {\it intrinsically linked}.

The purpose of this paper is to present an example of a minimal simplicial $n$-complex that possesses such an intrinsic linking property with respect to embeddings into $\mathbb{R}^{2n}$. Let us introduce three simplicial $n$-complexes $N_{1}^{(n)}$, $N_2^{(n)}$ and $N_3^{(n)}$ as follows. First, let $\sigma_{m}=|a_{0}a_{1}\cdots a_{m}|$ be an $m$-simplex whose vertices are  $a_{0},a_{1},\ldots,a_{m}$. This can be naturally regarded as a simplicial $m$-complex consisting of itself and all its faces. In this setting, the $k$-skeleton of $\sigma_{m}$ is standardly denoted by $\sigma_{m}^{k}$ for $k\le m$. Then we define 
\begin{align*}
N_{1}^{(n)} = \sigma_{2n+2}^{n}\setminus \{|a_{0}a_{j_{1}}\cdots a_{j_{n}}|\ |\ 1\le j_{1}<j_{2}<\cdots<j_{n}\le 2n+2\}. 
\end{align*}
In other words, $N_{1}^{(n)}$ is the subcomplex of $\sigma_{2n+2}^{n}$ obtained by removing all $n$-simplices that contain the vertex $a_0$. Next, for a positive integer $k$, let $[k]^{*m}$ denote the $m$-fold join of $k$ points. Explicitly, we set
\begin{align*}
[k]^{*m} = 
\{a_{0}^{0},a_{1}^{0},\ldots,a_{k-1}^{0}\} * \{a_{0}^{1},a_{1}^{1},\ldots,a_{k-1}^{1}\} * \cdots * \{a_{0}^{n},a_{1}^{n},\ldots,a_{k-1}^{n}\}. 
\end{align*}
Note that $[k]^{*n+1}$ can naturally be regarded as a simplicial $n$-complex. Then we define 
\begin{align*}
N_{2}^{(n)} = [3]^{*n+1}\setminus \{|a_{0}^{0}a_{j_{1}}^{1}\cdots a_{j_{n}}^{n}|\ |\ j_{1},j_{2},\ldots,j_{n}\in\{0,1,2\}\}. 
\end{align*}
In other words, $N_{2}^{(n)}$ is the subcomplex of $[3]^{*n+1}$ obtained by removing all $n$-simplices that contain the vertex $a_{0}^{0}$. Finally, for each integer $l=2,3,\ldots,n+1$, let $F_{l}$ be the family of all $n$-simplices in $\sigma_{2n+2}^{n}$ that are spanned by exactly $l$ vertices from $\{a_0, a_1, \dots, a_{l}\}$ and exactly $n-l+1$ vertices from $\{a_{n+2}, a_{n+3}, \dots, a_{2n+2}\}$. Equivalently, $F_{l}$ consists of all $n$-simplices of the form $|a_{i_{1}} \cdots a_{i_{l}} a_{j_{1}} \cdots a_{j_{n-l+1}}|$ where $0 \leq i_{1} < \cdots < i_{l} \leq l$ and $n+2 \leq j_{1} < \cdots < j_{n-l+1} \leq 2n+2$. Then we define 
\begin{align*}
N_3^{(n)} = \sigma_{2n+2}^{n} \setminus \bigcup_{l=2}^{n+1} F_{l}.
\end{align*}
As we will see in the next section, it is well known that neither $\sigma_{2n+2}^{n}$ nor $[3]^{*n+1}$ can be embedded in ${\mathbb R}^{2n}$ (Theorem \ref{VKF}). On the other hand, it is also known that any proper subcomplex of $\sigma_{2n+2}^{n}$ and of $[3]^{*n+1}$ can be embedded in $\mathbb{R}^{2n}$ (Gr\"{u}nbaum \cite{Grun69}). Therefore, each of $N_1^{(n)}$, $N_2^{(n)}$ and $N_3^{(n)}$ is embeddable in $\mathbb{R}^{2n}$. Then we have the following: 

\begin{Theorem}\label{ILR2n}
Let $n$ be a positive integer, and let $K$ be a simplicial $n$-complex that is $N_1^{(n)}$, $N_2^{(n)}$ or $N_3^{(n)}$. Then, for every embedding $f$ of $K$ into ${\mathbb R}^{2n}$, there exists an element $\lambda$ in $\Lambda^{n-1,n}(K)$ such that ${\rm lk}_{2}(f(\lambda)) = 1$. 
\end{Theorem}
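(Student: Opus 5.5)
The plan is a coning argument combined with the van Kampen--Flores theorem (Theorem \ref{VKF}): for any embedding of $\sigma_{2n+2}^{n}$ or $[3]^{*n+1}$ into $\mathbb{R}^{2n}$ in general position, the sum over all pairs of disjoint $n$-simplices of the $\mathbb{Z}_2$-intersection numbers is $1$. This is the van Kampen obstruction. Each $N_i^{(n)}$ is obtained from such a complex $L$ by deleting $n$-simplices. I will rebuild the missing simplices as singular ones by coning, and then interpret the resulting forced odd intersections as linking numbers of $(n-1)$-spheres with $n$-spheres in $K$.

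\emph{Case $K=N_1^{(n)}$.} Given an embedding $f$ of $K$, place $f(a_0)$ generically. For each missing simplex $|a_0a_{j_1}\cdots a_{j_n}|$, extend $f$ to a general position map $\bar f$ on it by coning the already embedded boundary sphere $f(\partial|a_{j_1}\cdots a_{j_n}|)$ from a point near $f(a_0)$. More precisely, choose a generic $n$-chain in $\mathbb{R}^{2n}$ with this boundary. This gives a general position map $\bar f:\sigma_{2n+2}^{n}\to\mathbb{R}^{2n}$. By the van Kampen--Flores parity statement, the number of intersections between images of disjoint $n$-simplices is odd.

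Intersections between two disjoint simplices of $K$ vanish, since $f$ is an embedding. The new cones all contain $a_0$, so two of them are never disjoint. The only remaining contributions come from a new cone $C_\tau=\bar f(|a_0\tau|)$, with $\tau$ an $(n-1)$-simplex avoiding $a_0$, meeting an $n$-simplex $\rho$ of $K$ disjoint from $a_0\tau$. Now group the $n$-simplices $\rho$ disjoint from $a_0\tau$. They are the $n$-faces of the $(n+1)$-simplex spanned by the $n+2$ vertices not in $a_0\tau$, so they form an $n$-sphere $S_\tau\subset K$. Summing over all such $\rho$ for fixed $\tau$ gives $C_\tau\cdot f(S_\tau)$, which equals ${\rm lk}_2(f(\partial\tau),f(S_\tau))$ because $\partial C_\tau=f(\partial\tau)$. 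Also $\partial\tau$ is an $(n-1)$-sphere in $K$ disjoint from $S_\tau$. Hence
\[
\sum_{\tau}{\rm lk}_2(f(\partial\tau\sqcup S_\tau))=1,
\]
so some term is $1$, and $\partial\tau\sqcup S_\tau\in\Lambda^{n-1,n}(K)$.

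\emph{Case $K=N_2^{(n)}$.} The argument is the same with $L=[3]^{*n+1}$. For a missing simplex $a^0_0\tau$, the disjoint $n$-simplices of $[3]^{*n+1}$ form the join $\{a^0_1,a^0_2\}*\cdots$ of two-point sets over the remaining vertices in each level. This join is an $n$-sphere contained in $K$, since it avoids $a_0^0$.

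\emph{Case $K=N_3^{(n)}$.} This is the main obstacle, because the deleted simplices do not share a common vertex. Missing simplices can therefore be disjoint from each other, which produces cone--cone intersection terms. I would try to order the coning by $l$, filling $F_{n+1}$, then $F_n$, and so on. The aim is to show that any two disjoint missing simplices together use at most $2n+2$ vertices split compatibly, so that their cones can be chosen (for instance, as cones from distinct generic points in a nested fashion) to contribute an even total.

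Alternatively, I would group the missing simplices of each $F_l$ so that their boundaries assemble into $(n-1)$-spheres. For example, $\partial$ of the simplex on $\{a_0,\dots,a_l\}$ joined with an $(n-l)$-simplex on the $a_j$ is a union of $F_l$ simplices together with a face present in $K$. The complementary $n$-simplices disjoint from these should again be checked to form an $n$-sphere inside $K$. I expect to verify this combinatorial bookkeeping, namely that the cone--cone terms cancel mod $2$ and that the complements are spheres in $K$, by an explicit count of vertices. The conclusion then follows exactly as in the first case.
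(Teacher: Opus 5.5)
Your Cases 1 and 2 follow the paper's argument. You extend $f$ over the missing simplices by general-position singular disks. The only disjoint pairs contributing to the van Kampen--Flores sum are then (missing $s$, simplex of $K$ disjoint from $s$), and these group into $\partial t$, where $t$ is the complementary $(n+1)$-simplex. Summing only over this family of $\lambda$'s, rather than over all of $\Lambda^{n-1,n}(K)$ as the paper does, is fine. Please fix the notation, though: with $\tau$ an $(n-1)$-simplex, $\partial\tau$ is an $(n-2)$-sphere. The $(n-1)$-sphere that bounds $C_\tau$ and links $S_\tau$ is $\partial|a_0\tau|$, which passes through $a_0$ and lies in $K$.

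The genuine gap is Case 3. You give no argument there, and the obstacle you set out to circumvent does not exist. It is true that the simplices of $\bigcup_{l=2}^{n+1}F_l$ have no vertex common to all of them. But the van Kampen--Flores sum only involves pairs, and any two of these simplices share a vertex. Indeed, if $s\in F_l$ and $s'\in F_{l'}$ with $l\le l'$, then $s$ has $l$ vertices and $s'$ has $l'$ vertices in the $(l'+1)$-element set $\{a_0,\dots,a_{l'}\}$. Since $l\ge 2$, we have $l+l'\ge l'+2$, so they overlap. This observation, which the paper states explicitly, is the missing idea, and it gives two things at once:
\begin{enumerate}
\item There are no cone--cone terms.
\item For each missing $s$, every $n$-face of the complementary $(n+1)$-simplex $t$ is disjoint from $s$, hence not missing, so $\partial t\subset N_3^{(n)}$. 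Also $\partial s\subset N_3^{(n)}$, because only $n$-simplices were deleted.
\end{enumerate}
Your Case 1 computation then applies verbatim, summing ${\rm lk}_2(f(\partial s),f(\partial t))$ over $s\in\bigcup_l F_l$. The ordered or nested coning and the sphere-grouping schemes are unnecessary. As written they are only intentions (``I expect to verify''), not a proof.
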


Therefore, $N_1^{(n)}$, $N_2^{(n)}$ and $N_3^{(n)}$ are simplicial $n$-complexes that are `intrinsically linked' with respect to embeddings into ${\mathbb R}^{2n}$. We note that the simplicial $n$-complex $\sigma_{2n+2}^{n}\setminus\{|a_{0}a_{1}\cdots a_{n}|\}$ has already been shown to satisfy this property (Freedman--Krushkal \cite[Proposition 4.1]{FK14}; see also Freedman--Krushkal--Teichner \cite[Lemma 6]{FKT94}). However, both our $N_{1}^{(n)}$ and $N_{3}^{(n)}$ are proper subcomplexes of this complex. Moreover, among all simplicial $n$-complexes possesing this property, the complexes $N_{1}^{(n)}$, $N_{2}^{(n)}$ and $N_{3}^{(n)}$ are minimal in the following sense.

\begin{Proposition}\label{min}
Let $N$ be a proper subcomplex of $N_1^{(n)}$, $N_2^{(n)}$ or $N_3^{(n)}$. Then there exists an embedding $f:N\to {\mathbb R}^{2n}$ such that ${\rm lk}_{2}(f(\lambda))= 0$ for any element $\lambda$ in $\Lambda^{n-1,n}(N)$.
\end{Proposition}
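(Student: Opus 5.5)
The plan is to reduce to deleting a single top-dimensional simplex, use the symmetry of $K$ to cut down to a few cases, and in each case write down an explicit embedding whose spheres visibly bound in the complement of one another.

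\emph{Reduction.} Let $K$ be one of $N_1^{(n)},N_2^{(n)},N_3^{(n)}$ and let $N\subsetneq K$. Choose a simplex $\tau$ of $K$ not in $N$, and enlarge $\tau$ to a maximal simplex of $K$. Then $N$ is contained in $K\setminus\{\tau'\}$ for some maximal simplex $\tau'$ of $K$, meaning $\tau'$ is removed but its proper faces are kept. Restricting an embedding of $K\setminus\{\tau'\}$ to $N$ keeps all linking numbers zero, because $\Lambda^{n-1,n}(N)\subset\Lambda^{n-1,n}(K\setminus\{\tau'\})$. So it suffices to treat $N=K\setminus\{\tau\}$ with $\tau$ maximal in $K$.

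Note that the maximal simplices of $K$ need not all be $n$-simplices. In $N_1^{(n)}$, for example, the $(n-1)$-simplices through $a_0$ are maximal. Next I use the combinatorial symmetry to reduce to a few cases:
\begin{itemize}
\item For $N_1^{(n)}$, the permutations of $a_1,\dots,a_{2n+2}$ act, leaving two cases: $\tau$ is an $n$-simplex avoiding $a_0$, or $\tau$ is an $(n-1)$-simplex containing $a_0$.
\item For $N_2^{(n)}$, the analogous group fixing $a_0^0$ acts, giving a similar split.
\item For $N_3^{(n)}$, the symmetry group is smaller, and $\tau$ is classified by how many of its vertices lie in $\{a_0,\dots,a_{n+1}\}$ and in $\{a_{n+2},\dots,a_{2n+2}\}$.
\end{itemize}

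\emph{Construction.} For each case I will give an explicit embedding of $K\setminus\{\tau\}$. The natural source is Grünbaum's embedding of a proper subcomplex of $\sigma_{2n+2}^n$ (resp.\ $[3]^{*n+1}$) with the corresponding simplex deleted, realized by vertices on the moment curve, or equivalently via the boundary of a cyclic $(2n+1)$-polytope projected to $\mathbb{R}^{2n}$.

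I will arrange the extra vertex $a_0$ (resp.\ $a_0^0$), together with the low-dimensional simplices through it, so that it lies in the \emph{unbounded} region of the complement of every $n$-sphere image. Concretely, I will take $a_0$ far away along a generic direction. Each simplex through $a_0$ is then close to a cone from infinity, and these cones can be routed off to infinity so that they miss everything else.

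\emph{Verification.} For every $\lambda=\gamma\sqcup\gamma'\in\Lambda^{n-1,n}(N)$ I will show ${\rm lk}_2(f(\gamma),f(\gamma'))=0$. The intended method is either of the following:
\begin{itemize}
\item Exhibit an $n$-chain bounded by $f(\gamma)$ that misses $f(\gamma')$, such as the cone from the far-away point.
\item Count mod $2$ the intersections of $f(\gamma')$ with a cone on $f(\gamma)$ from a generic point, using the explicit moment-curve coordinates. Here Gale's evenness criterion (alternation of vertices along the curve) decides which simplex pairs cross.
\end{itemize}

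Many $\lambda$ are ruled out combinatorially, because there are too few vertices left for two disjoint spheres. An $(n-1)$-sphere needs at least $n+1$ vertices and an $n$-sphere at least $n+2$. For $\sigma_{2n+2}$ there are $2n+3$ vertices in total, so one vertex is free and the spheres must be nearly boundaries of simplices. This sharply limits the possible $\lambda$, and usually leaves only pairs of simplex boundaries $\partial\sigma_n\sqcup\partial\sigma_{n+1}$ to check. The same counting applies to $[3]^{*n+1}$, using the join structure.

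\emph{Main obstacle.} The hardest step is this verification for $N_3^{(n)}$, since its removed families $F_l$ break most symmetry and the case list grows with $n$. There I would first check that the mod-$2$ sum of ${\rm lk}_2$ over the relevant $\lambda$ reduces to a van Kampen-type intersection count between disjoint simplex pairs, as in the proof of Theorem~\ref{ILR2n}. I would then choose the moment-curve order of the vertices so that the only crossing pair of disjoint simplices in $\sigma_{2n+2}$ involves $\tau$. Deleting $\tau$ then removes every odd linking simultaneously.
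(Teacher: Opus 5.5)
Your reduction to $N=K\setminus\{\tau\}$ with $\tau$ maximal, and your symmetry split for $N_1^{(n)}$ and $N_2^{(n)}$, agree with the paper. The gap is in the construction and its verification.

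Sending $a_0$ far away and routing the $(n-1)$-simplices through it off to infinity gives, by a dimension count, an embedding. It gives no control over linking. For $n\ge 2$ an $n$-sphere does not separate $\mathbb{R}^{2n}$, so ``the unbounded region'' carries no information. The cone from the far point on $f(\partial s)$ (with $a_0\in s$) is just a copy of the simplex $s$. Its mod $2$ intersection number with $f(\partial t)$ is exactly the van Kampen count in (\ref{lkdef}), whose total over all $\lambda$ is forced to be odd in the proof of Theorem \ref{ILR2n}. Placing $a_0$ far away does nothing to make these counts vanish one by one after $\tau$ is deleted.

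The ingredient you are missing is the one the paper uses: van Kampen's generic immersion $h$ of $H=\sigma_{2n+2}^n$ or $[3]^{*n+1}$ with exactly one double point. By symmetry, place it between an $n$-simplex $s\notin K$ (e.g.\ $|a_0a_1\cdots a_n|$) and an $n$-simplex disjoint from $s$. Then $h|_K$ is an embedding, by (\ref{lkdef}) it has exactly one odd link, and symmetry lets you choose that link to contain $\tau$.

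For $N_1^{(n)}$ and $N_2^{(n)}$, your own ingredients also suffice if assembled differently. Restrict any embedding $g$ of $H\setminus\{\tau\}$, which exists by Gr\"unbaum. Every $\lambda=\partial s\sqcup\gamma'\in\Lambda^{n-1,n}(N)$ has $a_0\in s$ (resp.\ $a_0^0\in s$), so $s\notin K$ and hence $s\neq\tau$. Also $\tau$ is not a face of $\partial s$, since $\partial s\subset N$. So $s$ is a simplex of $H\setminus\{\tau\}$, and $g(s)$ is an $n$-disk bounded by $g(\partial s)$ and disjoint from $g(\gamma')$, giving ${\rm lk}_2=0$.

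For $N_3^{(n)}$ your fallback is false as stated. Any $2n+3$ points on the moment curve in $\mathbb{R}^{2n}$ give exactly $2n+3$ crossing pairs of disjoint $n$-simplices: for each omitted vertex, the alternating split of the other $2n+2$. So no ordering makes the crossing pair unique.

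More seriously, for $n\ge 3$ the statement itself fails for $N_3^{(n)}$, so your ``main obstacle'' cannot be overcome. Let $\tau=|a_0a_1a_{n+1}a_{n+2}\cdots a_{2n-1}|$; for $n=3$ this is $|a_0a_1a_4a_5|$.
\begin{enumerate}
\item Since $a_{n+1}\in\tau$, $\tau$ could only lie in $F_{n+1}$. Simplices of $F_{n+1}$ have no vertex among $a_{n+2},\dots,a_{2n+2}$, while $\tau$ has $n-2\ge 1$ of them. So $\tau$ is an $n$-simplex of $N_3^{(n)}$.
\item Every $s\in F_l$ contains $l$ of the $l+1$ vertices $a_0,\dots,a_l$, hence contains $a_0$ or $a_1$. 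So $\mathcal{F}=\bigcup_l F_l\cup\{\tau\}$ is pairwise intersecting, and $N_3^{(n)}\setminus\{\tau\}=\sigma_{2n+2}^n\setminus\mathcal{F}$.
\item Run the proof of Theorem \ref{ILR2n} with $\mathcal{F}$ in place of $\bigcup_l F_l$. For $s\in\mathcal{F}$, every $n$-face of the complementary simplex $t_s$ misses $s$, hence is not in $\mathcal{F}$, so $\partial s\sqcup\partial t_s\in\Lambda^{n-1,n}(N_3^{(n)}\setminus\{\tau\})$.
\item Theorem \ref{VKF} then gives $\sum_{s\in\mathcal{F}}{\rm lk}_2(f(\partial s),f(\partial t_s))\equiv 1$ for every embedding $f$. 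So $N_3^{(n)}\setminus\{\tau\}$ is still intrinsically linked.
\end{enumerate}
The paper's own symmetry step for $N_3^{(n)}$ breaks at this same $\tau$. Because $\tau$ meets every $s\in\bigcup_l F_l$, it is an $n$-face of no $\gamma'=\partial t_s$ with $s\in\bigcup_l F_l$, and no symmetry of $N_3^{(n)}$ can move it into one. Your treatment of $N_1^{(n)}$ and $N_2^{(n)}$ is repairable as above, but the $N_3^{(n)}$ case of the statement is false for every $n\ge 3$.
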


A central role in the proof of Theorem \ref{ILR2n} is played by the van Kampen--Flores theorem. In the next section, we recall this theorem and present proofs of Theorem \ref{ILR2n} and Proposition \ref{min} using it. The van Kampen--Flores theorem is also used to construct examples of simplicial $n$-complexes that are intrinsically linked with respect to embeddings into $\mathbb{R}^{2n+1}$, see Nikkuni \cite{N26}.

\begin{Remark}\label{nsp}
A graph $G$ is said to be {\it non-separating planar} if there exists an embedding $f: G \to \mathbb{R}^2$ such that, for every cycle $\gamma$ in $G$, all vertices of $G$ not lying on $\gamma$ belong to the same connected component of $\mathbb{R}^2 \setminus f(\gamma)$. Equivalently, $G$ is not non-separating planar if and only if for every embedding $f:G\to \mathbb{R}^2$, the image $f(G)$ contains a two-component link consisting of a $0$-sphere and a $1$-sphere with $\mathbb{Z}_2$-linking number $1$. Dehkordi--Farr proved in \cite{DF} that a graph is non-separating planar if and only if it contains no subdivision of $K_1 \sqcup K_4$, $K_1 \sqcup K_{2,3}$, or $K_{1,1,3}$ as illustrated in Fig. \ref{nonsp}. They further applied this characterization to the study of linkless embeddings of graphs in $\mathbb{R}^3$. For related results, see Fowler--Li--Pavelescu \cite{FLP23}, Pavelescu--Pavelescu \cite{PP24}. Note that $N_1^{(1)}$ is isomorphic to $K_1 \sqcup K_4$, $N_2^{(1)}$ is isomorphic to $K_1 \sqcup K_{2,3}$, and $N_3^{(1)}$ is isomorphic to $K_{1,1,3}$. Thus, our simplicial $n$-complexes $N_{1}^{(n)}$, $N_{2}^{(n)}$ and $N_{3}^{(n)}$ may be regarded as higher-dimensional analogues of the minimal subgraphs that prevent a graph from being non-separating planar.
\end{Remark}

\begin{figure}[htbp]
\begin{center}
\scalebox{0.525}{\includegraphics*{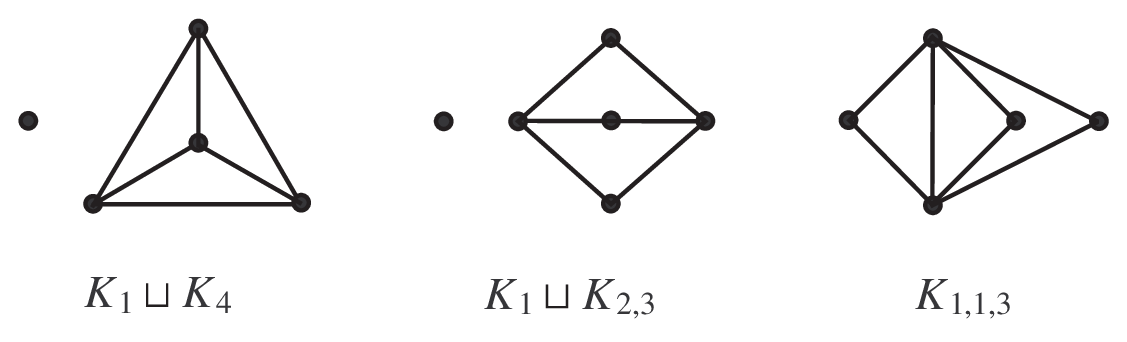}}
%\scalebox{0.65}{\includegraphics[bb = 36 404 577 568, width=15cm]{nonsp.pdf}}
\caption{$K_1 \sqcup K_4$, $K_1 \sqcup K_{2,3}$ and $K_{1,1,3}$}
\label{nonsp}
\end{center}
\end{figure}

\section{Proof of Theorem \ref{ILR2n}} 

First, recall that for any positive integer $n$, neither $\sigma_{2n+2}^{n}$ nor $[3]^{*n+1}$ can be embedded in ${\mathbb R}^{2n}$. It is well-known that every simplicial $n$-complex $K$ can be generically immersed into $\mathbb{R}^{2n}$, where an immersion $\varphi$ of $K$ into $\mathbb{R}^{2n}$ is said to be \textit{generic} if all singularities of $\varphi(K)$ are transversal double points occurring between the interiors of pairs of $n$-simplices. For a generic immersion $\varphi$ of $K$ into $\mathbb{R}^{2n}$ and for any two mutually disjoint $n$-simplices $\sigma, \tau \in \varDelta^n(K)$, let $l(\varphi(\sigma), \varphi(\tau))$ denote the number of double points between $\varphi(\sigma)$ and $\varphi(\tau)$. Then the following fact is known as the \textit{van Kampen--Flores theorem modulo two}.

\begin{Theorem}{\rm (van Kampen \cite{VK33}, Flores \cite{Flores32})}\label{VKF} 
Let $n$ be a positive integer, and let $K$ be a simplicial $n$-complex that is $\sigma_{2n+2}^{n}$ or $[3]^{*n+1}$. Then for every generic immersion $\varphi$ of $K$ into ${\mathbb R}^{2n}$, the following holds: 
\begin{align*}
\sum_{\substack{\{s,s'\}\subset\varDelta^{n}(K)\\ s\cap s'=\emptyset}}l(\varphi(s),\varphi(s'))\equiv 1\pmod{2},
\end{align*}
where the sum runs over all unordered pairs $\{s,s'\}$ of distinct $n$-simplices with $s\cap s'=\emptyset$. 
\end{Theorem}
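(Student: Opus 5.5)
The plan is to show that the parity of the sum is independent of the generic immersion, and then to evaluate it for one convenient immersion. Write $v(\varphi)\in\mathbb{Z}_2$ for the left-hand side.

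\emph{Step 1 (invariance).} Any two generic immersions $\varphi,\varphi'$ of $K$ into $\mathbb{R}^{2n}$ are joined by a generic PL homotopy. Along such a homotopy, $v$ changes only at finitely many moments of two kinds.
\begin{itemize}
\item A transverse pair of double points between the interiors of two disjoint $n$-simplices is created or cancelled. This changes $v$ by $2$.
\item The interior of an $n$-simplex $\sigma$ passes once through the image of an $(n-1)$-simplex $\tau$ with $\sigma\cap\tau=\emptyset$. (Dimensions: $(n+1)+(n-1)=2n$.)
\end{itemize}
In the second case, $l(\varphi(\sigma),\varphi(s'))$ changes by one for every $n$-simplex $s'\supset\tau$ with $s'\cap\sigma=\emptyset$, and nothing else changes. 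Passages of $\sigma$ through a face it shares vertices with contribute no terms, since only disjoint pairs are counted. So it suffices to count the admissible $s'$:
\begin{itemize}
\item For $K=\sigma_{2n+2}^{n}$: $s'=\tau\cup\{w\}$, where $w$ avoids the $(n+1)+n$ vertices of $\sigma\cup\tau$. That leaves exactly $2n+3-(2n+1)=2$ choices.
\item For $K=[3]^{*n+1}$: $\tau$ misses exactly one colour class $j$, and $w$ must be one of the $3-1=2$ vertices of class $j$ not used by $\sigma$.
\end{itemize}
In both cases the change is even, so $v(\varphi)$ is an invariant of $K$.

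\emph{Step 2 (computation for $\sigma_{2n+2}^n$).} Place the $2n+3$ vertices on the moment curve $t\mapsto(t,t^2,\dots,t^{2n})$ at $t_0<\dots<t_{2n+2}$ and extend linearly; this is generic. By the classical Gale evenness and Radon argument on the moment curve, two disjoint $n$-simplices $A,B$ (each with $n+1$ vertices) have intersecting images iff their vertex sets interlace. In that case they meet in exactly one point. Disjoint $A,B$ use all vertices but one. For each omitted vertex, the remaining $2n+2$ ordered points admit exactly one unordered interlacing partition. Hence $v=2n+3\equiv1$.

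\emph{Step 3 (computation for $[3]^{*n+1}$).} This is the step I expect to be the main obstacle, since no equally symmetric linear model is immediate. I would first try induction on $n$. The case $n=0$ is three points in $\mathbb{R}^0$: three disjoint pairs, each meeting once, giving $3\equiv1$. For the inductive step, $[3]^{*n+1}=[3]^{*n}*[3]$, and I would realize the join by a suspension-type construction: immerse $[3]^{*n}$ generically in a copy of $\mathbb{R}^{2n-2}$ inside $\mathbb{R}^{2n}$, and cone it from three points placed so that each pair of cones over disjoint $(n-1)$-simplices meets exactly in the configuration of three points in the normal $\mathbb{R}^2$ (two apexes on opposite sides, one far). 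A local count should then express $v$ for $[3]^{*n+1}$ as an odd multiple of $v$ for $[3]^{*n}$.

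If this bookkeeping becomes unwieldy, the fallback is the standard obstruction-theoretic route. Identify $v$ with the evaluation of the $\mathbb{Z}_2$ van Kampen obstruction on the deleted product. For $[3]^{*n+1}$ the simplicial deleted join is $([3]^{*2}_\Delta)^{*n+1}\cong(S^1)^{*n+1}=S^{2n+1}$ with the free antipodal action. Then a Borsuk--Ulam argument shows that the equivariant obstruction, hence $v$, is nonzero. The same argument also re-proves Step 2, because the deleted join of $\sigma_{2n+2}^n$ is $(2n+1)$-connected enough to contain an equivariant $S^{2n+1}$.
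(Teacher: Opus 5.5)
The paper gives no proof of this theorem; it quotes it from van Kampen and Flores. So I judge your argument on its own. Steps 1 and 2 are the standard argument and are correct. The count of exactly two admissible $s'$ is what makes the sum over all disjoint pairs a mod-$2$ cycle, and the moment-curve map gives $2n+3$ crossing pairs for $\sigma_{2n+2}^{n}$.

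The genuine gap is Step 3: you never actually evaluate $v$ for $[3]^{*n+1}$.

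\emph{The inductive cone construction.} It is only described (``a local count should then express\dots''). It is not even clear that it yields a generic map. For example, if $[3]^{*n}$ sits in $\mathbb{R}^{2n-2}\times 0$ and the apexes have pairwise linearly independent components in the normal $\mathbb{R}^2$, then linear cones over disjoint simplices meet only over double points of the base, i.e.\ on their boundaries. A perturbation and a real count would still be needed.

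\emph{The fallback.} It does not close the gap either. Borsuk--Ulam rules out a $\mathbb{Z}_2$-map from the deleted join $S^{2n+1}$ to $S^{2n}$. That gives non-embeddability, i.e.\ nonvanishing of the integral (twisted) van Kampen obstruction. The theorem is the stronger mod-$2$ assertion: for every generic immersion, the van Kampen cocycle evaluates to $1$ on the cycle of all disjoint pairs. Equivalently, $w_1^{2n}$ of the deleted-product double cover must be nonzero on that cycle. Getting this from ``no equivariant map'' needs two further ingredients:
\begin{enumerate}
\item the cohomological form of Borsuk--Ulam ($w_1^{2n+1}\neq 0$ on $\mathbb{R}P^{2n+1}$);
\item a transfer from the deleted join to the deleted product.
\end{enumerate}
Your ``hence $v$ is nonzero'' skips exactly this.

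No new model is needed: the moment curve handles $[3]^{*n+1}$ as well.
\begin{itemize}
\item Place the $3n+3$ vertices on the moment curve in $\mathbb{R}^{2n}$ so that each colour class occupies three consecutive positions.
\item As in your Step 2, any $2n+1$ of these points are affinely independent. Hence simplices sharing a vertex meet only in their common face.
\item Two disjoint $n$-simplices $A,B$ meet (once, transversally, in their interiors) iff their $2n+2$ vertices interlace along the curve.
\item $A$ and $B$ use distinct vertices of every class, and the classes are consecutive blocks. So the $A/B$ label sequence along the curve is a concatenation of blocks $AB$ or $BA$. It alternates iff all blocks have the same orientation.
\item Normalize so that $A$ comes first in class $0$. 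Then an interlacing pair is the same as a choice of a $2$-subset in each class, with the smaller vertex going to $A$.
\end{itemize}
So there are exactly $3^{n+1}$ crossing pairs, an odd number. Together with your Step 1, this completes the proof for $[3]^{*n+1}$.
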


In Theorem \ref{VKF}, the case of $n=1$ corresponds to the well-known fact that neither $K_{5}$ nor $K_{3,3}$ can be embedded in the plane.

\begin{proof}[Proof of Theorem \ref{ILR2n}]
We first consider the case $K = N_1^{(n)}$. In what follows, for an $m$-simplex $s$, we will also denote its $(m-1)$-skeleton $s^{m-1}$ by $\partial s$. We define collections $\Gamma^{n-1}$ and $\Gamma^n$ of $(n-1)$ and $n$-subcomplexes of $N_1^{(n)}$, respectively, as follows: 
\begin{align*}
\Gamma^{n-1} &= \left\{\partial s \mid s\in \varDelta^{n}(\sigma_{2n+2}^{n}),\,a_{0}\in s\right\},\\
\Gamma^{n} &= \left\{\partial t \mid t\in \varDelta^{n+1}(\sigma_{2n+2}),\,a_{0}\notin t\right\}. 
\end{align*}
Then every $\lambda \in \Lambda^{n-1,n}(N_1^{(n)})$ is the disjoint union of an element $\gamma$ of $\Gamma^{n-1}$ and an element $\gamma'$ of $\Gamma^n$, see Fig. \ref{N_1_link} for $n=2$. Let $f:N_{1}^{(n)}\to {\mathbb R}^{2n}$ be an embedding. This can be extended to a generic immersion $\hat{f}:\sigma_{2n+2}^{n} \to \mathbb{R}^{2n}$ such that, for any two $n$-simplices $s,s' \in \varDelta^n(\sigma_{2n+2}^{n})$ with $a_0 \in s$ and $a_0 \notin s'$, the images $\hat{f}(s)$ and $\hat{f}(s')$ intersect transversely at finitely many double points, all of which occur in the interiors of both $n$-simplices. Note that for any $\lambda = \gamma \sqcup \gamma' \in \Lambda^{n-1,n}(N_1^{(n)})$ with $\gamma = \partial s$, $s \in \varDelta^n(\sigma_{2n+2}^n)$ and $a_0 \in s$, we have 
\begin{align}\label{lkdef}
{\rm lk}_{2}(f(\gamma),f(\gamma'))
\equiv \sum_{s'\in \varDelta^{n}(\gamma')}l(\hat{f}(s),\hat{f}(s'))\pmod{2}.
\end{align}
Note also that for any $\lambda=\partial s\sqcup\gamma'\in\Lambda^{n-1,n}(N_1^{(n)})$, each $n$-simplex $s'\in\varDelta^n(\gamma')$ determines a unique unordered pair $\{s,s'\}$ with $s\cap s'=\emptyset$, and conversely, every such unordered pair $\{s,s'\}\subset\varDelta^n(\sigma_{2n+2}^n)$ with $a_0\in s$ and $a_0\notin s'$ arises from exactly one $\lambda$. Then, by (\ref{lkdef}) and Theorem \ref{VKF}, we obtain 
\begin{align*}\label{lksum}
\sum_{\lambda\in \Lambda^{n-1,n}(N_{1}^{(n)})}{\rm lk}_{2}(f(\lambda))
&= \sum_{\substack{\gamma\sqcup\gamma'\in \Lambda^{n-1,n}(N_{1}^{(n)}) \\ \gamma\in \Gamma^{n-1},\,\gamma'\in\Gamma^{n}}}{\rm lk}_{2}(f(\gamma),f(\gamma'))\\
&\equiv \sum_{\substack{\gamma\sqcup\gamma'\in \Lambda^{n-1,n}(N_{1}^{(n)}) \\ \gamma=\partial s,\,s\in \varDelta^{n}(\sigma_{2n+2}^{n}),\,a_{0}\in s}}
\bigg(\sum_{s'\in \varDelta^{n}(\gamma')}l(\hat{f}(s),\hat{f}(s'))\bigg)\\
&= \sum_{\substack{\{s,\,s'\}\subset \varDelta^{n}(\sigma_{2n+2}^{n}) \\ s\cap s'=\emptyset}} 
l(\hat{f}(s),\hat{f}(s'))\\
&\equiv 1\pmod{2}. 
\end{align*}
Thus, there exists some $\lambda$ in $\Lambda^{n-1,n}(N_{1}^{(n)})$ such that ${\rm lk}_{2}(f(\lambda))=1$.

Next, we consider the case $K = N_{2}^{(n)}$. We define collections $\Gamma^{n-1}$ and $\Gamma^n$ of $(n-1)$ and $n$-subcomplexes of $N_{2}^{(n)}$, respectively, as follows: 
\begin{align*}
\Gamma^{n-1} &= \left\{\partial s \mid s\in \varDelta^{n}([3]^{*n+1}),\,a_{0}^{0}\in s\right\},\\
\Gamma^{n} &= \left\{\left\{a_{1}^{0},a_{2}^{0}\right\}*\left\{a_{i_{1}}^{1},a_{j_{1}}^{1}\right\}*\cdots*\left\{a_{i_{n}}^{n},a_{j_{n}}^{n}\right\} \mid 0\le i_{q}<j_{q}\le 2,\,q\ge 1\right\}. 
\end{align*}
Note that the $(n+1)$-fold join of two points is homeomorphic to the $n$-sphere. Then every $\lambda \in \Lambda^{n-1,n}(N_{2}^{(n)})$ is the disjoint union of an element $\gamma$ of $\Gamma^{n-1}$ and an element $\gamma'$ of $\Gamma^n$, see Fig. \ref{N_2_link} for $n=2$. Let $f:N_{2}^{(n)}\to {\mathbb R}^{2n}$ be an embedding. This can be extended to a generic immersion $\hat{f}:[3]^{*n+1} \to \mathbb{R}^{2n}$ such that, for any two $n$-simplices $s,s' \in \varDelta^n([3]^{*n+1})$ with $a_{0}^{0} \in s$ and $a_{0}^{0} \notin s'$, the images $\hat{f}(s)$ and $\hat{f}(s')$ intersect transversely at finitely many double points, all of which occur in the interiors of both $n$-simplices. Then, in a similar way as in the case of $N_{1}^{(n)}$, we have 
\begin{align*}%\label{lksum2}
\sum_{\lambda\in \Lambda^{n-1,n}(N_{2}^{(n)})}{\rm lk}_{2}(f(\lambda))
&= \sum_{\substack{\gamma\sqcup\gamma'\in \Lambda^{n-1,n}(N_{2}^{(n)}) \\ \gamma\in \Gamma^{n-1},\,\gamma'\in\Gamma^{n}}}{\rm lk}_{2}(f(\gamma),f(\gamma'))\\
&\equiv \sum_{\substack{\gamma\sqcup\gamma'\in \Lambda^{n-1,n}(N_{2}^{(n)}) \\ \gamma=\partial s,\,s\in \varDelta^{n}([3]^{*n+1}),\,a_{0}^{0}\in s}}
\bigg(\sum_{s'\in \varDelta^{n}(\gamma')}l(\hat{f}(s),\hat{f}(s'))\bigg)\\
&= \sum_{\substack{\{s,\,s'\}\subset \varDelta^{n}([3]^{*n+1}) \\ s\cap s'=\emptyset}} 
l(\hat{f}(s),\hat{f}(s'))\\
&\equiv 1\pmod{2}. 
\end{align*}
Thus the result follows.

Finally, we consider the case $K = N_{3}^{(n)}$. We define collections $\Gamma^{n-1}$ and $\Gamma^n$ of $(n-1)$ and $n$-subcomplexes of $N_{3}^{(n)}$, respectively, as follows:
\begin{align*}
\Gamma^{n-1} &= \Big\{\partial s \;\Big|\; s\in \bigcup_{l=2}^{n+1}F_{l}\Big\},\\
\Gamma^{n} &= \Big\{\partial t \;\Big|\; t\in \varDelta^{n+1}(\sigma_{2n+2}),\,\varDelta^{n}(t)\cap\Big(\bigcup_{l=2}^{n+1}F_{l}\Big)=\emptyset\Big\}. 
\end{align*}
Note that any two $n$-simplices in $\bigcup_{l=2}^{n+1} F_{l}$ share at least one common vertex. Now, for $\gamma = \partial s \in \Gamma^{n-1}$, let $t$ be the $(n+1)$-simplex spanned by the $n+2$ vertices not contained in $s$. Then every $s' \in \varDelta^{n}(t)$ is disjoint from $s$ and thus does not belong to $\bigcup_{l=2}^{n+1} F_l$. Consequently, $\gamma' = \partial t$ belongs to $\Gamma^n$. Then every $\lambda \in \Lambda^{n-1,n}(N_3^{(n)})$ is the disjoint union of an element $\gamma$ of $\Gamma^{n-1}$ and an element $\gamma'$ of $\Gamma^n$, see Fig. \ref{N_3_link} for $n=2$. Let $f:N_{3}^{(n)}\to {\mathbb R}^{2n}$ be an embedding. This can be extended to a generic immersion $\hat{f}:\sigma_{2n+2}^{n} \to \mathbb{R}^{2n}$ such that, for any two $n$-simplices $s,s' \in \varDelta^n(\sigma_{2n+2}^{n})$ with $s\in \bigcup_{l=2}^{n+1}F_{l}$ and $s'\in \varDelta^{n}(\sigma_{2n+2})\setminus \bigcup_{l=2}^{n+1}F_{l}$, the images $\hat{f}(s)$ and $\hat{f}(s')$ intersect transversely at finitely many double points, all of which occur in the interiors of both $n$-simplices. Note that for any $\lambda=\partial s\sqcup\gamma'\in\Lambda^{n-1,n}(N_{3}^{(n)})$, each $n$-simplex $s'\in\varDelta^n(\gamma')$ determines a unique unordered pair $\{s,s'\}$ with $s\cap s'=\emptyset$, and conversely, every such unordered pair $\{s,s'\}\subset\varDelta^n(\sigma_{2n+2}^n)$ with $s\in \bigcup_{l=2}^{n+1}F_{l}$ and $s'\in \varDelta^{n}(\sigma_{2n+2})\setminus \bigcup_{l=2}^{n+1}F_{l}$ arises from exactly one $\lambda$. Then, in the same way as in the case of $N_{1}^{(n)}$, we obtain 
\begin{align*}
\sum_{\lambda\in \Lambda^{n-1,n}(N_{3}^{(n)})}{\rm lk}_{2}(f(\lambda))
&= \sum_{\substack{\gamma\sqcup\gamma'\in \Lambda^{n-1,n}(N_{3}^{(n)}) \\ \gamma\in \Gamma^{n-1},\,\gamma'\in\Gamma^{n}}}{\rm lk}_{2}(f(\gamma),f(\gamma'))\\
&\equiv \sum_{\substack{\gamma\sqcup\gamma'\in \Lambda^{n-1,n}(N_{3}^{(n)}) \\ \gamma=\partial s,\,s\in \bigcup_{l=2}^{n+1}F_{l}}}
\bigg(\sum_{s'\in \varDelta^{n}(\gamma')}l(\hat{f}(s),\hat{f}(s'))\bigg)\\
&= \sum_{\substack{\{s,\,s'\}\subset \varDelta^{n}(\sigma_{2n+2}^{n}) \\ s\cap s'=\emptyset}} 
l(\hat{f}(s),\hat{f}(s'))\\
&\equiv 1\pmod{2}. 
\end{align*}
Thus the result follows. This completes the proof. 
\end{proof}

\begin{figure}[htbp]
\begin{center}
\scalebox{0.575}{\includegraphics*{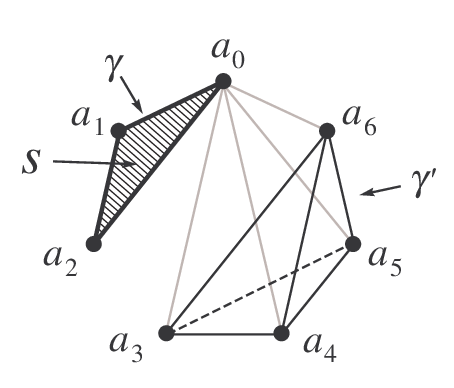}}
%\scalebox{0.65}{\includegraphics[bb = 206 302 424 488, width=7cm]{N_1_link.pdf}}
\caption{$\lambda=\gamma\sqcup \gamma'\in \Lambda^{1,2}(N_{1}^{(2)})$}
\label{N_1_link}
\end{center}
\end{figure}

\begin{figure}[htbp]
\begin{center}
\scalebox{0.6}{\includegraphics*{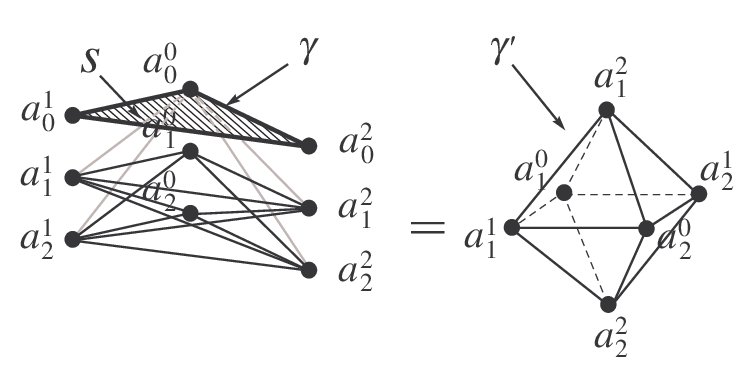}}
%\scalebox{0.75}{\includegraphics[bb = 125 304 487 488, width=10cm]{N_2_link.pdf}}
\caption{$\lambda=\gamma\sqcup \gamma'\in \Lambda^{1,2}(N_{2}^{(2)})$}
\label{N_2_link}
\end{center}
\end{figure}

\begin{figure}[htbp]
\begin{center}
\scalebox{0.575}{\includegraphics*{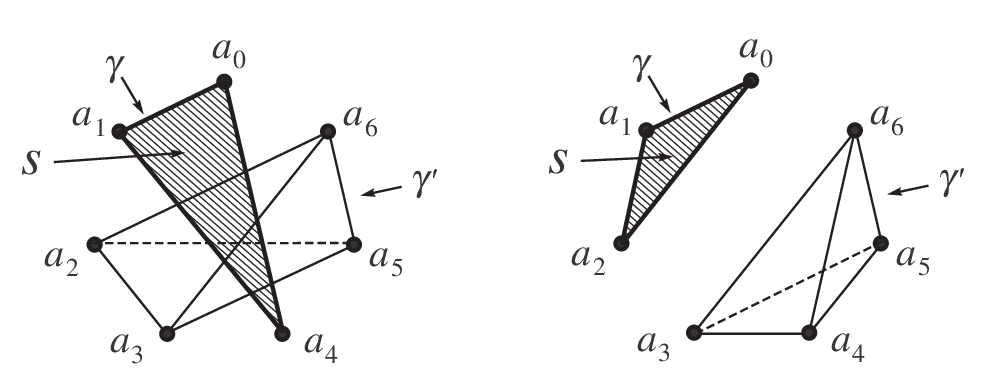}}

%\scalebox{0.65}{\includegraphics[bb = 206 302 424 488, width=7cm]{N_3_link.pdf}}
\caption{$\lambda=\gamma\sqcup \gamma'\in \Lambda^{1,2}(N_3^{(2)})$}
\label{N_3_link}
\end{center}
\end{figure}

\begin{proof}[Proof of Proposition \ref{min}]
Let $H$ be either $\sigma_{2n+2}^{n}$ or $[3]^{*n+1}$. It is well known that there exists a generic immersion $h : H \to \mathbb{R}^{2n}$ with exactly one double point, which occurs between the interiors of a pair of disjoint $n$-simplices (van Kampen \cite{VK28}). By the symmetry of $H$, we may assume without loss of generality that this double point occurs between $h(|a_{0}a_{1}\cdots a_{n}|)$ and $h(|a_{n+1}a_{n+2}\cdots a_{2n+1}|)$ if $H=\sigma_{2n+2}^{n}$, and between $h(|a_{0}^{0}a_{1}^{0}\cdots a_{n}^{0}|)$ and $h(|a_{0}^{1}a_{1}^{1}\cdots a_{n}^{1}|)$ if $H=[3]^{*n+1}$. 
We first consider the cases of $N_1^{(n)}$ and $N_2^{(n)}$. For each $i=1,2$, let $h_i : N_i^{(n)} \to \mathbb{R}^{2n}$ be the restriction of $h$ to the subcomplex $N_i^{(n)} \subset H$. Each $h_{i}$ is an embedding, and the image $h(N_{i}^{(n)})$ contains exactly one two-component link $L_{i}$ consisting of an $(n-1)$-sphere and an $n$-sphere with ${\mathbb Z}_{2}$-linking number $1$. These links are given explicitly as follows:

$\bullet$ for $i=1$: $L_{1} = h(\partial |a_{0}a_{1}\ldots a_{n}|)\sqcup h(\partial |a_{n+1}a_{n+2}\cdots a_{2n+2}|)$,  

$\bullet$ for $i=2$: $L_{2} = h(\partial |a_{0}^{0}a_{1}^{0}\cdots a_{n}^{0}|)\sqcup h(\{a_{1}^{0},a_{2}^{0}\}*\{a_{1}^{1},a_{2}^{1}\}*\cdots *\{a_{1}^{n},a_{2}^{n}\})$.

We now show that if $N$ is any proper subcomplex of $N_{i}^{(n)}$ ($i=1,2$), then $h(N)$ does not contain any two-component link consisting of an $(n-1)$-sphere and an $n$-sphere with $\mathbb{Z}_2$-linking number $1$. To see this, it is sufficient to check the following cases: 

\begin{enumerate}
\item For $N_{1}^{(n)}$, every maximal proper subcomplex obtained by removing a single simplex is, up to relabeling of the vertices, isomorphic to either $N_{1}^{(n)}\setminus \{|a_{0}a_{1}\cdots a_{n-1}|\}$ or $N_{1}^{(n)}\setminus \{|a_{n+1}a_{n+2}\cdots a_{2n+1}|\}$. Thus, it suffices to consider $N=N_{1}^{(n)}\setminus \{|a_{0}a_{1}\cdots a_{n-1}|\}$ or $N_{1}^{(n)}\setminus \{|a_{n+1}a_{n+2}\cdots a_{2n+1}|\}$. In both cases, $h(N)$ does not contain $L_{1}$.

\item For $N_{2}^{(n)}$, every maximal proper subcomplex obtained by removing a single simplex is, up to relabeling of the vertices, isomorphic to either $N_{2}^{(n)}\setminus \{|a_{0}^{0}a_{0}^{1}\cdots a_{0}^{n-1}|\}$ or $N_{2}^{(n)}\setminus \{|a_{1}^{0}a_{1}^{1}\cdots a_{1}^{n}|\}$. Thus, it suffices to consider $N=N_{2}^{(n)}\setminus \{|a_{0}^{0}a_{0}^{1}\cdots a_{0}^{n-1}|\}$ or $N_{2}^{(n)}\setminus \{|a_{1}^{0}a_{1}^{1}\cdots a_{1}^{n}|\}$. In both cases, $h(N)$ does not contain $L_{2}$. 
\end{enumerate}

Therefore, in all cases, no such link appears in $h(N)$ for any proper subcomplex $N$ of $N_i^{(n)}\ (i=1,2)$. 

Next, we consider the case of $N_3^{(n)}$. For each $l = 2, 3, \dots, n+1$, let $\gamma = \partial s \in \Gamma^{n-1}$, where
\begin{eqnarray*}
s = |a_0 a_1 \cdots a_{l-1} a_{n+2} a_{n+3} \cdots a_{2n-l+2}|.
\end{eqnarray*}
Then the remaining vertices form an element $\gamma' \in \Gamma^n$ given by
\begin{eqnarray*}
\gamma' = \partial |a_l a_{l+1} \cdots a_{n+1} a_{2n-l+3} a_{2n-l+4}\cdots a_{2n+2}|.
\end{eqnarray*} 
By relabeling the vertices if necessary and restricting the generic immersion $h:\sigma_{2n+2}^{(n)}\to \mathbb{R}^{2n}$ to $N_3^{(n)}$, 
we obtain an embedding $h_l : N_3^{(n)} \to \mathbb{R}^{2n}$ 
such that $h_l(N_3^{(n)})$ contains exactly one two-component link $L_3 = h_l(\gamma) \sqcup h_l(\gamma')$ with $\mathbb{Z}_2$-linking number $1$. By the symmetry of $N_3^{(n)}$, any subcomplex $N$ obtained by removing exactly one $n$-simplex from $N_3^{(n)}$ is isomorphic to a subcomplex obtained by removing exactly one $n$-simplex from $\gamma'$ for a suitable choice of $l$. For the corresponding embedding $h_l$, the image $h_l(N)$ does not contain the link $L_3$. Thus, every proper subcomplex of $N_3^{(n)}$ admits an embedding into $\mathbb{R}^{2n}$ in which all two-component links have $\mathbb{Z}_2$-linking number $0$. This completes the proof. 
\end{proof}

As noted in Remark \ref{nsp}, for $n=1$, the only minimal simplicial $1$-complexes with this intrinsic linking property are $N_{1}^{(1)}$, $N_{2}^{(1)}$ and $N_{3}^{(1)}$. By contrast, the author suspects that further examples exist for $n\ge 2$. Therefore, let us pose the following problem.

\begin{Problem}\label{q1}
List all minimal simplicial $n$-complexes that possess the intrinsic linking property in the case $n \ge 2$. 
\end{Problem}

\begin{Remark}\label{4di}
In particular, for $n=2$, it is known that there exists a nonsplittable link consisting of two $2$-spheres in ${\mathbb R}^{4}$ (van Kampen \cite{VK28}, Zeeman \cite{Z65}, see also \cite[pp. 88--95]{Rolfsen76}). The ${\mathbb Z}_{2}$-linking number in the above sense is not defined for such links, and it is not so easy to prove that they are nonsplittable. For a simplicial $2$-complex $K$ to contain a disjoint union of two $2$-spheres, at least eight $0$-simplices are required. As mentioned in Section 1, actually any embedding of $\sigma_{7}^{2}$ into ${\mathbb R}^{5}$ contains a two-component link of $2$-spheres whose ${\mathbb Z}_{2}$-linking number is $1$, but $\sigma_{7}^{2}$ cannot be embedded into ${\mathbb R}^{4}$. The author is not certain, but it seems doubtful that there exists a simplicial $2$-complex whose every embedding into ${\mathbb R}^4$ contains a nonsplittable two-component link of $2$-spheres.
\end{Remark}

\section*{Acknowledgment}

The author was supported by JSPS KAKENHI Grant Number 25K06987.

{\normalsize
}

\end{document}